\renewcommand*{\backref}[1]{}
\renewcommand*{\backrefalt}[4]{%
    \ifcase #1 (Not cited.)%
    \or        (Cited on page~#2.)%
    \else      (Cited on pages~#2.)%
    \fi}
\newtheorem{theorem}{Theorem}[section]
\newtheorem{definition}{Definition}[section]
\newtheorem{proposition}[theorem]{Proposition}
\newcommand{\R}{\mathbb{R}}
\newcommand{\C}{\mathbb{C}}
\DeclareMathOperator{\End}{End}
\newcommand{\muu}{\text{\sfrac{1}{2}}}
\author{J. P. Fatelo and N. Martins-Ferreira}
\address{School of Technology and Management, Centre for Rapid and Sustainable Product Development - CDRSP, Polytechnic Institute of Leiria, P-2411-901 Leiria, Portugal.}
\email{jorge.fatelo@ipleiria.pt}
\email{martins.ferreira@ipleiria.pt}
\title[Affine mobi spaces]{Affine mobi spaces}
\date{\today; \currenttime}
 \subjclass[2020]{Primary 14R10
                        , 16D80
								; Secondary 18A25 
								          , 08C15
													}
 \keywords{Mobility algebra, mobi algebra, mobi space, affine space, affine mobi space, unit interval, ternary operation, geodesics, split-complex, dual numbers, isomorphism}
\thanks{  This work is supported by Fundação para a Ciência e a Tecnologia (FCT\-UID-Multi-04044-2019), Centro2020 (PAMI -- ROTEIRO\-/0328\-/2013\-- 022158) and Polytechnic of Leiria through the projects CENTRO\--01\--0247-FEDER: 069665, 069603, 039958, 039969, 039863, 024533 and also ESTG and CDRSP}
\begin{document}

\begin{abstract}
The category of mobi algebras has been introduced as a model to the unit interval of real numbers. 
The notion of  mobi space over a mobi algebra has been proposed as a model for spaces with geodesic paths. In this paper we analyse the particular case of affine mobi spaces and show that there is an isomorphism of categories between R-modules and pointed affine mobi spaces over a mobi algebra R as soon as R is a unitary ring in which $2$ is an invertible element. 
\end{abstract}

\maketitle


\section{Introduction}

This paper is the continuation of a program which aims at modelling spaces with geodesics as purely algebraic structures by considering here the particular case of affine spaces. A geodesic from a point~$x$ to a point~$y$ in a space $X$ is in particular a path, that is, a curve $\gamma\colon{[0,1]\to X}$ with $\gamma(0)=x$ and $\gamma(1)=y$. The necessity of a suitable algebraic structure for the unit interval has lead us to the notion of mobi algebra \cite{mobi}. The unit interval can be seen as a set $A$, together with three constants $0$, $\muu$, $1$ and a ternary operation $$p(a,b,c)=a-ba+bc$$ satisfying the following eight conditions:
\begin{enumerate}[label={\bf (A\arabic*)}]
\item\label{alg_mu} $p(1,\muu,0)=\muu$
\item\label{alg_01} $p(0,a,1)=a$
\item\label{alg_idem} $p(a,b,a)=a$
\item\label{alg_0} $p(a,0,b)=a$
\item\label{alg_1} $p(b,1,a)=a$
\item\label{alg_cancel} $p(a,\muu,b)=p(a,\muu,c)\implies b=c$
\item\label{alg_homo} $p(a,p(b_1,b_2,b_3),c)=p(p(a,b_1,c),b_2,p(a,b_3,c))$
\item\label{alg_medial}
$p(p(a_1,b,c_1),\muu,p(a_2,b,c_2))=p(p(a_1,\muu,a_2),b,p(c_1,\muu,c_2))$.
\end{enumerate}
This is precisely the definition of a mobility algebra (mobi algebra for short). That is, a mobi algebra is a system $(A,p,0,\muu,1)$ consisting of a set $A$, together with a ternary operation $p\colon{A^3\to A}$ and three constants $0$, $\muu$ and $1$ that are required to satisfy the eight axioms above. We may form the category of mobi algebras by considering as morphisms the expected structure preserving maps. This means that if $(A,p,0,\muu,1)$ and $(A',p',0,\muu,1)$ are two mobi algebras then a morphism is a map $f\colon{A\to A'}$ such that 
$f(0)=0,\ f(\muu)=\muu,\ f(1)=1$, and $ f(p(a,b,c))=p'(f(a),f(b),f(c))$, for all $a,b,c\in A$. For example, the map $$f:[0,1]\to[-1,5];\quad t\mapsto 6t-1 $$ is a morphism from $([0,1],p,0,\frac{1}{2},1)$ to $([-1,5],p',-1,2,5)$ with $$p(a,b,c)=a+b(c-a)\quad \textrm{and}\quad
p'(a,b,c)=\frac{c+5a}{6}+b\,\frac{c-a}{6}.$$ Other examples are the inclusion of the unit interval into the real line or the projection of the mobi algebra considered in Example \ref{ex_lozenge} of Section~\ref{sec_examples} into the unit interval.

If there exists an element $2\in A$ such that \mbox{$p(0,\muu,2)=1$} then the structure $(A,+,\cdot,0,1)$ is a unitary ring with \mbox{$a+b=p(0,2,p(a,\muu,b))$} and $a\cdot b=p(0,a,b)$ for every $a,b\in A$ (a proof can be found in \cite{mobi}). This gives an isomorphism of categories between mobi algebras with $2$ and unitary rings with $\muu$, the inverse of $2$.

A mobi space \cite{preprint2} is defined over a mobi algebra $(A,p,0,\muu,1)$ as a pair $(X,q)$ with $X$ a set and $$q\colon{X\times A\times X\to X}$$ a map satisfying the following conditions:
\begin{enumerate}[label={\bf (X\arabic*)}]
\item\label{space_0} $q(x,0,y)=x$
\item\label{space_1} $q(y,1,x)=x$
\item\label{space_idem} $q(x,a,x)=x$
\item\label{space_cancel} $q(x,\muu,y)=q(x,\muu,y')\implies y=y'$
\item\label{space_homo} $q(q(x,a,y),b,q(x,c,y))=q(x,p(a,b,c),y)$.
\end{enumerate}
Note that axioms \ref{space_0}---\ref{space_homo} are analogous to axioms \ref{alg_idem}---\ref{alg_homo}. A natural generalization of \ref{alg_medial} would be
\begin{eqnarray}\label{affine_1}
q(q(x_1,a,y_1),\muu,q(x_2,a,y_2))=q(q(x_1,\muu,x_2),a,q(y_1,\muu,y_2)).
\end{eqnarray}
However, this condition is too restrictive and it is not in general verified when $q(x,t,y)$, with $t\in[0,1]$, is a geodesic path in an arbitrary space (see for example \cite{preprint2} for the case of the 2-sphere).
That is the reason why we do not include it in the definition of a mobi space.
The interpretation of axioms \ref{space_0} to \ref{space_cancel} is clear when $q(x,t,y)$ is the point in the line segment connecting $x$ and $y$ at $t\in [0,1]$, while the interpretation of \ref{space_homo} is depicted in the following diagram (for simplicity we have considered $X=[0,1]$ and $p=q$).
\[
\xymatrix@R-=0.5cm{
&&1\ar@{-}[d]\\
&1\ar@{--}[r]\ar@{-}[d]&y\ar@{-}[d]\\
1\ar@{-}[d]\ar@{--}[r]&c\ar@{--}[r]\ar@{-}[d]&q(x,c,y)\ar@{-}[d]\\
b\ar@{-}[d]\ar@{--}[r]&p(a,b,c)\ar@{--}[r]\ar@{-}[d]&(X5)\ar@{-}[d]\\
0\ar@{--}[r]&a\ar@{--}[r]\ar@{-}[d]&q(x,a,y)\ar@{-}[d]\\
&0\ar@{--}[r]&x\ar@{-}[d]\\
&&0}
\]

 When condition (\ref{affine_1}) is satisfied for all $x_1,x_2,y_1,y_2\in~X$ and $a\in A$, we say that the $A$-mobi space $(X,q)$ is \emph{affine} and speak of an \emph{affine $A$-mobi space}. The purpose of this paper is to show that, for a unitary ring $R$ with $\muu$ (which is the same as a mobi algebra with~$2$), the category of pointed affine $R$-mobi spaces is isomorphic to the familiar category of $R$-modules.
A pointed affine mobi space $X$ is an affine mobi space together with an element of $X$ considered as its \emph{origin}.
Every affine space in the usual sense (see e.g. \cite{Porteus} p.74) can be seen as an affine mobi space with $q(x,t,y)=x+t(y-x)$. Note that $t$ is a scalar, $(y-x)$ is a vector and $x$ is a point (see also e.g. \cite{Bertram,bourn_geometrie}). Moreover, if $R$ is a field of characteristic different from 2 then every affine mobi space is an affine space in the usual sense since, for every choice of an origin $e\in X$, the structure $(X,+,e,\varphi)$ is a vector space over $R$ with $x+y=q(e,2,q(x,\muu,y))$ and $\varphi_t(x)=q(e,t,x)$ (Theorem \ref{mobi2module}).

In the same way as for modules over a ring, if fixing a mobi algebra $(A,p,0,\muu,1)$ then we may consider the category of affine $A$-mobi spaces whose morphisms $g\colon{(X,q)\to (X',q')}$ are the maps \mbox{$g\colon{X\to X'}$} such that
\[g(q(x,a,y))=q'(g(x),a,g(y))\]
for every $x,y\in X$ and $a\in A$. Moreover, if allowing a variable mobi algebra then we may consider the category of all affine mobi spaces. In that situation the morphisms are pairs $(f,g)$ with $f\colon{A\to A'}$ a morphism of mobi algebras and $g\colon{X\to X'}$ a map such that
\[g(q(x,a,y))=q'(g(x),f(a),g(y))\]
for every $x,y\in X$ and $a\in A$. Note that $X$ is an affine mobi space over $A$ while $X'$ is an affine mobi space over $A'$. 
When $A$ is a field (of characteristic different from 2) and $X$ is a vector space, the previous condition with $q(x,a,y)=(1-a)x+ay$ is
\[g((1-a)x+ay)=(1-f(a))g(x)+f(a)g(y),\] %
while the homologous condition in vector spaces is
\[g(a(x+y))=f(a)(g(x)+g(y))\]
for all vectors $x$ and $y$ and scalars $a\in A$.

This paper is organized as follows. In Section \ref{sec_mobi_alg} the notion of  mobi algebra is recalled, together with its derived operations (further details can be found in \cite{mobi}).  Section \ref{sec_mobi_space} is devoted to the definition of mobi space and a short list of its properties. In Section \ref{sec_examples}, examples of affine and non-affine mobi spaces are presented. The main results are in Section \ref{sec:modules} where the comparison between affine mobi spaces and modules over a ring is detailed. In Section \ref{isomorphism} we formalize the corresponding isomorphism between the two categories.

\section{Mobi algebra}\label{sec_mobi_alg}

In this section we briefly recall the notion of mobi algebra, introduced in \cite{mobi}, and some of its basic properties. Several examples  of different nature were presented in  \cite{mobi}. The motivating example of a mobi algebra is the unit interval with the three constants $0, \frac{1}{2}, 1\in [0,1]$ and $p(a,b,c)=a-ba+bc$. As stated in the introduction, a mobi algebra plays the role of scalars (for a mobi space) in the same way as a ring (or a field) models the scalars for a module (or a vector space) over the base ring (or field). 
In order to have an intuitive interpretation of its axioms we may consider a mobi algebra as a mobi space over itself and use the geometric intuition provided in section \ref{sec_mobi_space}.
Namely, that the operation $p(x,t,y)$ is the position of a particle moving from a point $x$ to a point $y$ at an instant $t$ while following a geodesic path.

\begin{definition}\cite{mobi}\label{mobi_algebra}
A mobi algebra is a system $(A,p,0,\muu,1)$, in which $A$ is a set, $p$ is a ternary operation and $0$, $\muu$ and $1$ are elements of A, that satisfies the axioms \ref{alg_mu}---\ref{alg_medial}.
\end{definition}

Some properties of mobi algebras can be suitably expressed in terms of a unary operation~"$\bar{()}$", which is an involution, and binary operations~"$\cdot$", "$\circ$" and "$\oplus$" defined as follows (see \cite{mobi} for more details).
\begin{definition}\label{binary_definition} Let $(A,p,0,\muu,1)$ be a mobi algebra. We define:
\begin{eqnarray}
\label{def_complementar}\overline{a}&=&p(1,a,0)\\
\label{def_product}a\cdot b&=&p(0,a,b)\\
\label{def_oplus}a\oplus b&=&p(a,\muu,b)\\
\label{def_star}a\circ b&=&p(a,b,1).
\end{eqnarray}
\end{definition}

We recall a list of some properties of a mobi algebra. If $(A,p,0,\muu,1)$ is a mobi algebra, then: 
\begin{eqnarray}
\label{Bmuu} \overline{\muu}&=&\muu\\
\label{B130} a\cdot\muu=\muu\cdot a&=&0\oplus a\\
\label{cancell-muu} a\cdot\muu=b\cdot\muu&\Rightarrow& a=b\\
\label{B7} p(\overline{a},\muu,a)&=&\muu\\
\label{Boverlinea} \overline{a}=a &\Rightarrow & a=\muu\\
\label{complementary-p} \overline{p(a,b,c)}&=&p(\overline{a},b,\overline{c})\\
\label{commut} p(c,b,a)&=&p(a,\overline{b},c)\\
\label{circ_cdot} \overline{a\circ b}&=&\overline{b}\cdot\overline{a}\\
\label{important}\muu\cdot p(a,b,c)&=&(\overline{b}\cdot a)\oplus (b\cdot c).
\end{eqnarray}
The element $\muu$ of a mobi algebra is the only element $a$ such that $a=\bar{a}$ and as a consequence it is automatically preserved by homomorphisms as soon as the rest of the structure is preserved. Alternatively, $1$ and $\bar1=0$ are preserved by morphisms as soon as $\muu$ is preserved, as explained in the following proposition.

\begin{proposition}
Consider two mobi algebras $A=(A,p,0,\muu,1)$ and $A'=(A',p',0,\muu,1)$ and a map $f:A\to A'$such that
\begin{equation}\label{homofp}
f(p(a,b,c))=p'(f(a),f(b),f(c)).
\end{equation}
Then, the following properties of $f$ are equivalent:
\begin{enumerate}
\item[(i)] $f(0)=0$ and $f(1)=1$;
\item[(ii)] $f\left(\muu\right)=\muu$.
\end{enumerate}
\end{proposition}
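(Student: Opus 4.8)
The plan is to prove the two implications separately, relying on the structural axioms of the mobi algebra together with the already-established property that $\muu$ is the unique fixed point of the involution $\overline{(\cdot)}$.

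For the direction (i) $\Rightarrow$ (ii), I would use axiom \ref{alg_mu}, which states $p(1,\muu,0)=\muu$. The natural idea is to evaluate $f$ at the element $\muu$ by writing $\muu$ itself as a $p$-expression. Since $f$ respects $p$ by \eqref{homofp}, I would compute
\[
f\left(\muu\right)=f\left(p(1,\muu,0)\right)=p'\left(f(1),f\left(\muu\right),f(0)\right)=p'\left(1,f\left(\muu\right),0\right),
\]
using $f(1)=1$ and $f(0)=0$ from hypothesis (i). Now $p'(1,c,0)$ is precisely the complement operation $\overline{c}$ in $A'$ by \eqref{def_complementar}, so this identity reads $f(\muu)=\overline{f(\muu)}$. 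By property \eqref{Boverlinea}, any element equal to its own complement must be $\muu$, hence $f(\muu)=\muu$. This gives (ii) cleanly.

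For the direction (ii) $\Rightarrow$ (i), the hypothesis is $f(\muu)=\muu$, and I must recover $f(0)=0$ and $f(1)=1$. Here I would look for $p$-expressions that produce $0$ and $1$ from $\muu$ and arbitrary elements in a way that $f$ can transport. One promising route uses axioms \ref{alg_0} and \ref{alg_1}, which give $p(a,0,b)=a$ and $p(b,1,a)=a$; the difficulty is that these characterize $0$ and $1$ as the \emph{middle} argument, and without already knowing $f(0)$ and $f(1)$ it is not immediate how to isolate them. A cleaner approach is to exploit axiom \ref{alg_idem}, $p(a,b,a)=a$, in combination with the cancellation axiom \ref{alg_cancel}. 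Applying $f$ to $p(\muu,\muu,\muu)=\muu$ only reconfirms (ii), so instead I would try to express $0$ and $1$ through the involution: since $\overline{0}=p(1,0,0)=1$ and $\overline{1}=0$, and since \eqref{complementary-p} shows $f$ commutes with the bar operation once we know $f$ preserves the relevant constants, the two statements $f(0)=0$ and $f(1)=1$ are equivalent to each other, so it suffices to establish just one of them.

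The main obstacle is therefore producing a single identity that forces, say, $f(0)=0$ from $f(\muu)=\muu$ alone. My plan is to combine \eqref{B7}, which states $p(\overline{a},\muu,a)=\muu$, with the cancellation property \eqref{cancell-muu}. Evaluating $f$ on $p(0,\muu,1)=1$ (axiom \ref{alg_01} with $a=1$) yields $p'(f(0),\muu,f(1))=f(1)$, while axiom \ref{alg_01} in $A'$ gives $p'(0,\muu,f(1))=f(1)$ as well; if I can arrange a cancellation in the $\muu$-slot via \ref{alg_cancel} or \eqref{cancell-muu}, the two forms force $f(0)=0$. I expect the delicate point to be setting up the correct instance so that the cancellation axiom applies with matching outer arguments, after which $f(1)=\overline{f(0)}=\overline{0}=1$ follows from \eqref{complementary-p} and $\overline{0}=1$. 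Once both directions are complete, the equivalence of (i) and (ii) is established.
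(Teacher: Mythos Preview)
Your argument for (i)$\Rightarrow$(ii) is correct and coincides exactly with the paper's: apply $f$ to axiom \ref{alg_mu}, recognise the result as $\overline{f(\muu)}=f(\muu)$, and invoke \eqref{Boverlinea}.

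The direction (ii)$\Rightarrow$(i), however, contains concrete errors rather than merely an unfinished sketch. You write ``$p(0,\muu,1)=1$ (axiom \ref{alg_01} with $a=1$)'', but \ref{alg_01} reads $p(0,a,1)=a$; the expression $p(0,\muu,1)$ has $a=\muu$, so it equals $\muu$, not $1$. Applying $f$ therefore yields $p'(f(0),\muu,f(1))=\muu$, not $f(1)$. Your second claim, that \ref{alg_01} in $A'$ gives $p'(0,\muu,f(1))=f(1)$, is the same misreading: \ref{alg_01} would give $p'(0,f(1),1)=f(1)$, with $f(1)$ in the \emph{middle} slot and $1$ on the right. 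Neither of the two equalities you hope to match via \ref{alg_cancel} is actually available, so the cancellation you anticipate cannot be set up from these ingredients.

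The paper proceeds in the opposite order from your plan: it first secures $f(1)=1$ and only then deduces $f(0)=0$. For $f(1)=1$ it uses the identity $1\cdot\muu=\muu$ (which is \ref{alg_1}: $p(0,1,\muu)=\muu$), obtains $f(1)\cdot\muu=\muu=1\cdot\muu$ in $A'$, and cancels via \eqref{cancell-muu}. With $f(1)=1$ in hand, applying $f$ to \ref{alg_mu} gives $p'(1,\muu,f(0))=\muu=p'(1,\muu,0)$, and now \ref{alg_cancel} yields $f(0)=0$ directly. Your remark that $f(0)=0$ and $f(1)=1$ are interderivable through the involution is correct, but you still need one genuine identity that pins down either constant; the ones you wrote do not, so this half of the proof needs to be redone along the lines above.
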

\begin{proof}
If (\ref{homofp}) and $(i)$ are satisfied then $p'(1,f(\muu),0)= f(\muu)$, by Axiom \ref{alg_mu}, i.e $\overline{f(\muu)}=f(\muu)$ which implies, using Property (\ref{Boverlinea}), that $f(\muu)=\muu$. Conversely, if (\ref{homofp}) and $(ii)$ are satisfied then, because $1\cdot\muu=\muu$ by \ref{alg_1}, we have $f(1)\cdot \muu=\muu=1\cdot\muu$ and therefore $f(1)=1$ due to Property (\ref{cancell-muu}). Now \ref{alg_mu} imply 
$p'(1,\muu,f(0))= \muu=p'(1,\muu,0)$ and therefore $f(0)=0$ due to \ref{alg_cancel}.
\end{proof}

The following proposition can be found (implicitly) in \cite{mobi}. We say that a unitary ring has an element called $\muu$ when the equation $x+x=1$ can be solved with $x=\muu$. Similarly, we say that a mobi algebra has an element called $2$ when the equation $p(0,\muu,x)=1$ can be solved with $x=2$.

\begin{proposition}\label{thm: mobi=ring}
There is an isomorphism between the full subcategory of mobi algebras with $2$ and the full subcategory of unitary rings with $\muu$.
\end{proposition}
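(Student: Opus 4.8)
The plan is to exhibit two mutually inverse functors, each acting as the identity on underlying maps, so that the entire content reduces to matching the two object assignments. In one direction I rely on the construction recalled in the introduction and proved in \cite{mobi}: a mobi algebra $(A,p,0,\muu,1)$ carrying an element $2$ with $p(0,\muu,2)=1$ yields a unitary ring $(A,+,\cdot,0,1)$ via
\[
a+b=p(0,2,p(a,\muu,b)),\qquad a\cdot b=p(0,a,b).
\]
First I would record that this ring has $\muu$: using \ref{alg_idem} and $p(0,a,1)=a$ one gets $\muu+\muu=p(0,2,p(\muu,\muu,\muu))=p(0,2,\muu)=\muu\cdot2=1$, so $\muu$ solves $x+x=1$. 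A morphism of mobi algebras $f$ is sent to the same underlying map; since $+$ and $\cdot$ are built from $p$ and the three constants, which $f$ preserves, $f$ is automatically a ring homomorphism. This defines a functor $F$.

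In the other direction, given a unitary ring $(R,+,\cdot,0,1)$ with $\muu$ (so $\muu+\muu=1$ and hence $\muu$ is the two-sided inverse of $2=1+1$), I would set
\[
p(a,b,c)=a+b\cdot(c-a)
\]
and keep the constants $0,\muu,1$, with $2=1+1$. The bulk of this step is the routine verification that the eight axioms \ref{alg_mu}--\ref{alg_medial} hold: for instance \ref{alg_mu} follows from $p(0,\muu,2)=\muu\cdot2=1$, while the cancellation axiom \ref{alg_cancel} uses that $p(a,\muu,b)=\muu(a+b)$ together with invertibility of $2$, and the remaining identities (the mediality axiom \ref{alg_medial} being the longest) expand into ordinary ring arithmetic. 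On morphisms I again take the same underlying map; a ring homomorphism $g$ preserves $\muu$ because $\muu$ is the unique solution of $x+x=1$, hence $g$ preserves $p$ and all constants and is a morphism of mobi algebras. This defines a functor $G$.

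The heart of the argument is that $F$ and $G$ are mutually inverse on objects, which (both functors being the identity on underlying maps) reduces to two formula checks. Going ring $\to$ mobi $\to$ ring is easy: from $p(a,b,c)=a+b(c-a)$ one computes $p(0,a,b)=ab$ and $p(a,\muu,b)=\muu(a+b)$, whence $p(0,2,p(a,\muu,b))=2\muu(a+b)=a+b$, so both operations are recovered. The delicate direction is mobi $\to$ ring $\to$ mobi, where one must show that the ternary operation reconstructed from the derived ring operations, namely $a+b\cdot(c-a)$, equals the original $p(a,b,c)$. I expect this to be the main obstacle, and the plan is to attack it through Property \ref{important}, which gives $\muu\cdot p(a,b,c)=(\overline{b}\cdot a)\oplus(b\cdot c)$. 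Writing $\overline{b}=p(1,b,0)=1-b$ and $x\oplus y=\muu(x+y)$, the right-hand side becomes $\muu(a-ba+bc)$, which matches $\muu\cdot\bigl(a+b(c-a)\bigr)$ computed by distributivity; cancelling the common factor $\muu$ by \ref{cancell-muu} (after passing from left to right multiplication by $\muu$ through \ref{B130}) then yields $p(a,b,c)=a+b(c-a)$.

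Finally, since both functors act as the identity on underlying maps and the two object assignments are now seen to be mutually inverse, the composites $F\circ G$ and $G\circ F$ are the identity functors on the respective full subcategories, giving the claimed isomorphism of categories.
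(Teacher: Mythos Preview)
Your proposal is correct and follows the same approach as the paper: you set up the very same pair of functors (with $p(a,b,c)=a+b(c-a)$ in one direction and $a+b=p(0,2,p(a,\muu,b))$, $a\cdot b=p(0,a,b)$ in the other, identity on underlying maps) and then check mutual inverseness---the paper simply defers all of these verifications to \cite{mobi}, whereas you spell out the key one (recovering $p$ via Property~\ref{important}). One small slip: the computation $p(0,\muu,2)=\muu\cdot 2=1$ is not a proof of \ref{alg_mu} (which is $p(1,\muu,0)=\muu$, an immediate consequence of $\muu+\muu=1$) but rather the verification that the constructed mobi algebra has a $2$; this does not affect the argument.
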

\begin{proof}
If $(A,+,\cdot,0,1)$ is a unitary ring with $\muu$ then $(A,p,0,\muu,1)$ is a mobi algebra with $p(a,b,c)=a+bc-ba$. Conversely, if $(A,p,0,\muu,1)$ is a mobi algebra with $2$ then  $(A,+,\cdot,0,1)$ is a unitary ring with \mbox{$a+b=p(0,2,p(a,\muu,b))$} and $a\cdot b=p(0,a,b)$. A map $f\colon{A\to A'}$  is a ring homomorphism if and only if it is a mobi algebra morphism provided the ring has a $\muu$ and the mobi algebra has a $2$. The details can be found in \cite{mobi}.
\end{proof}

A possible generalization  of Axiom \ref{alg_medial} is 
\begin{eqnarray}\label{affine_2}
p(p(x_1,a,y_1),b,p(x_2,a,y_2))=p(p(x_1,b,x_2),a,p(y_1,b,y_2))
\end{eqnarray}
which must hold for all $b$ and not only for $b=\muu$. The following proposition is a particular instance of Proposition 6.4 in \cite{mobi}.

\begin{proposition}
The full subcategory of mobi algebras with $2$ and satisfying condition (\ref{affine_2}) instead of \ref{alg_medial}, is isomorphic to the category of commutative unitary rings with $\muu$.
\end{proposition}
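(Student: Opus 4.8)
The plan is to build on Proposition~\ref{thm: mobi=ring}, which already establishes the isomorphism between mobi algebras with $2$ and unitary rings with $\muu$. By that result, a mobi algebra with $2$ satisfying some extra axiom corresponds to a unitary ring with $\muu$ satisfying whatever the extra axiom becomes under the translation $p(a,b,c)=a+bc-ba$. So the whole problem reduces to computing what condition~(\ref{affine_2}) translates into on the ring side, and showing that this translated condition is equivalent to commutativity of the ring. Since Proposition~\ref{thm: mobi=ring} also tells us that a map is a ring homomorphism precisely when it is a mobi morphism, the functors and their inverses are already in place; only the compatibility of the distinguished full subcategories needs to be checked, and that is exactly the equivalence of the two extra conditions.

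First I would substitute $p(a,b,c)=a+bc-ba$ into both sides of~(\ref{affine_2}) and expand. Writing $P=p(x_1,a,y_1)=x_1+ay_1-ax_1$, $Q=p(x_2,a,y_2)=x_2+ay_2-ax_2$, the left-hand side $p(P,b,Q)=P+bQ-bP$ expands to a sum of monomials, and similarly the right-hand side. After cancelling the terms common to both sides, I expect the identity to collapse to a single equation asserting that two particular products agree for all values of the five variables $x_1,x_2,y_1,y_2,a,b$. The natural guess is that the surviving obstruction is a commutator expression of the form $b\,a - a\,b$ (suitably multiplied by differences such as $y_1-x_1$), so that~(\ref{affine_2}) holding for all arguments forces $ba=ab$.

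The two directions then run as follows. For the forward direction, specializing the variables in the reduced identity — for instance setting $x_1=0$, $y_1=1$, $x_2=0$, $y_2=0$, or some similarly economical choice that isolates a single commutator $ab-ba$ — should yield $ab=ba$ for arbitrary $a,b$, proving commutativity. For the converse, if the ring is commutative then every monomial in the expansion of the left-hand side matches the corresponding monomial on the right after reordering factors, so~(\ref{affine_2}) holds identically; this direction is purely mechanical. Thus a mobi algebra with $2$ satisfies~(\ref{affine_2}) if and only if its associated ring is commutative.

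The main obstacle I anticipate is purely bookkeeping: the expansion of both sides of~(\ref{affine_2}) produces a large number of degree-two and degree-three monomials in the variables, and one must cancel them carefully to see exactly which commutator survives. The conceptual content is light once the reduction to rings is made; the risk lies in a sign or ordering slip during the expansion. To guard against this, I would reduce the number of free variables by a judicious specialization before expanding in full generality, using the simplest substitution that already exposes a nontrivial commutator, and only afterwards confirm that the general identity reduces to that same commutator condition. Once the reduced condition is identified as ``$ab=ba$ for all $a,b$,'' the statement follows immediately from Proposition~\ref{thm: mobi=ring} applied to the respective full subcategories.
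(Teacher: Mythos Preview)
Your proposal is correct and follows essentially the same route as the paper: invoke Proposition~\ref{thm: mobi=ring} and then show that condition~(\ref{affine_2}) is equivalent to commutativity of the ring product. The paper's proof is terser---it simply asserts that in a mobi algebra with $2$ condition~(\ref{affine_2}) is equivalent to $p(0,a,b)=p(0,b,a)$ (i.e.\ $a\cdot b=b\cdot a$), which is exactly what your planned specialization (e.g.\ $x_1=x_2=y_1=0$, $y_2=1$) isolates, so your expansion-then-specialize strategy and the paper's direct statement amount to the same argument.
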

\begin{proof}
In a mobi algebra with $2$, the condition (\ref{affine_2}) is equivalent to the condition $p(0,a,b)=p(0,b,a)$ for all $a,b\in A$.
\end{proof}

Since we are not necessarily interested in commutative rings, we choose to consider condition  (\ref{affine_1}) as a direct translation of axiom \ref{alg_medial} rather than as a translation of condition (\ref{affine_2}). Note that if condition~(\ref{affine_1}) is generalized to 
\begin{eqnarray*}
q(q(x_1,a,y_1),b,q(x_2,a,y_2))=q(q(x_1,b,x_2),a,q(y_1,b,y_2)),
\end{eqnarray*}
then it offers a correspondence with the axioms for an affine space in~\cite{Bertram}. 

\section{Mobi space}\label{sec_mobi_space}

In this section we recall the definition of a mobi space over a mobi algebra. Its main purpose is to serve as a model for spaces with a geodesic path connecting any two points. 
We call \emph{affine} to a A-mobi space $X$ verifying  condition~(\ref{affine_1}).

\begin{definition}\label{mobi_space}
Let $(A,p,0,\muu,1)$ be a mobi algebra. An $A$-mobi space $(X,q)$ consists of a set $X$ and a map $q\colon{X\times A\times X\to X}$ such that the conditions  \ref{space_0}---\ref{space_homo} are satisfied.

\end{definition}


\begin{definition}\label{affine_mobi_space}
Let $(A,p,0,\muu,1)$ be a mobi algebra. An affine $A$-mobi space $(X,q)$ is a $A$-mobi space such that condition (\ref{affine_1}) is satisfied for all $x_1,x_2,y_1,y_2\in X$ and $a\in A$.
\end{definition}

Here are some immediate consequences of the axioms for a mobi space, not necessarily affine.

\begin{proposition}\label{properties_space} Let $(A,p,0,\muu,1)$ be a mobi algebra and $(X,q)$ an A-mobi space. It follows that:
\begin{enumerate}[label={\bf (Y\arabic*)}]
\item\label{Y1} $q(y,a,x)=q(x,\overline{a},y)$
\item\label{Y2} $q(y,\muu,x)=q(x,\muu,y)$
\item\label{Y3} $q(x,a,q(x,b,y))=q(x,a\cdot b,y)$
\item\label{Y4} $q(q(x,a,y),b,y)=q(x,a\circ b,y)$
\item\label{Y7} $q(q(x,a,y),\muu,q(x,b,y))=q(x,a\oplus b,y)$
\item\label{Y6} $q(x,\muu,q(x,a,y))=q(x,a,q(x,\muu,y))$
\item\label{Y5} $q(q(x,a,y),\muu,q(y,a,x))=q(x,\muu,y)$
\item\label{Y8} $q(q(q(x,a,y),b,x),\muu,q(x,b,q(x,c,y)))\\
=q(x,\muu,q(x,p(a,b,c),y))$
\item\label{Y9} $q(x,a,y)=q(y,a,x)\Rightarrow q(x,a,y)=q(x,\muu,y)$
\item\label{Y10} $q(x,a,y)=q(x,b,y)\Rightarrow q(x,p(a,t,b),y)=q(x,a,y)$, \text{for all $t$}.
\end{enumerate}
\end{proposition}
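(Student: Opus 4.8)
The plan is to derive all ten identities mechanically from the five space axioms \ref{space_0}--\ref{space_homo}, reading the homogeneity axiom \ref{space_homo} in both directions and importing the algebraic identities already recorded for $A$. The cornerstone is \ref{Y1}, the space analogue of \eqref{commut}: expanding $\overline{a}=p(1,a,0)$ and applying \ref{space_homo} from right to left gives
\[
q(x,\overline{a},y)=q(x,p(1,a,0),y)=q(q(x,1,y),a,q(x,0,y))=q(y,a,x),
\]
where the outer equalities use \ref{space_1} and \ref{space_0}. Property \ref{Y2} is then the case $a=\muu$ combined with $\overline{\muu}=\muu$ from \eqref{Bmuu}.

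Next I would realise the three derived operations of Definition \ref{binary_definition} inside the space. Reading \ref{space_homo} backwards and collapsing one leg through \ref{space_0} yields
\[
q(x,a,q(x,b,y))=q(q(x,0,y),a,q(x,b,y))=q(x,p(0,a,b),y)=q(x,a\cdot b,y),
\]
which is \ref{Y3}; the symmetric collapse using \ref{space_1} gives \ref{Y4}, and the case $b=\muu$ of \ref{space_homo} is exactly \ref{Y7}. Then \ref{Y6} follows by applying \ref{Y3} to both of its sides and invoking $a\cdot\muu=\muu\cdot a$ from \eqref{B130}, while \ref{Y10} is immediate: under the hypothesis $q(x,a,y)=q(x,b,y)$, axiom \ref{space_homo} turns its left-hand side into $q(z,t,z)$ with $z=q(x,a,y)$, which equals $z$ by the idempotency \ref{space_idem}.

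For \ref{Y5} and \ref{Y9} the common ingredient is $a\oplus\overline{a}=\muu$, which follows from \eqref{B7} together with the symmetry $p(a,\muu,b)=p(b,\muu,a)$ (itself a consequence of \eqref{commut} and \eqref{Bmuu}). For \ref{Y5}, I rewrite $q(y,a,x)=q(x,\overline{a},y)$ using \ref{Y1} and then apply \ref{Y7} to obtain $q(x,a\oplus\overline{a},y)=q(x,\muu,y)$. For \ref{Y9}, the hypothesis combined with \ref{Y1} gives $q(x,a,y)=q(x,\overline{a},y)$; substituting into \ref{Y7} produces $q(x,\muu,y)=q(q(x,a,y),\muu,q(x,\overline{a},y))=q(q(x,a,y),\muu,q(x,a,y))$, which collapses to $q(x,a,y)$ by \ref{space_idem}.

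The main obstacle is \ref{Y8}, the only identity demanding genuine algebraic computation. Here I would first simplify the two inner terms: writing $x=q(x,0,y)$ and using \ref{space_homo} gives $q(q(x,a,y),b,x)=q(x,p(a,b,0),y)$, while \ref{Y3} gives $q(x,b,q(x,c,y))=q(x,p(0,b,c),y)$. Applying \ref{Y7} then reduces the left-hand side to $q(x,\,p(a,b,0)\oplus p(0,b,c),\,y)$, so (after rewriting the right-hand side by \ref{Y3}) the whole statement reduces to the purely algebraic identity $p(a,b,0)\oplus p(0,b,c)=\muu\cdot p(a,b,c)$. This is where the work concentrates: using $p(a,b,0)=p(0,\overline{b},a)$ from \eqref{commut} I recognise the left-hand side as $(\overline{b}\cdot a)\oplus(b\cdot c)$, which is exactly $\muu\cdot p(a,b,c)$ by \eqref{important}. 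The delicate point throughout is orienting each application of \ref{space_homo} correctly and keeping the argument order straight when passing through \eqref{commut}.
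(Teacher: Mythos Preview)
Your argument is correct and follows essentially the same plan as the paper: each item is obtained by a targeted application of \ref{space_homo} together with the listed algebraic identities of $A$, and your treatment of \ref{Y8} via $(\overline{b}\cdot a)\oplus(b\cdot c)=\muu\cdot p(a,b,c)$ is exactly the paper's. The only differences are local: you derive \ref{Y4} by a direct collapse $y=q(x,1,y)$ (the paper instead flips via \ref{Y1}, applies \ref{Y3}, and uses \eqref{circ_cdot}), and you prove \ref{Y9} through \ref{Y7} rather than \ref{Y5}; these are minor and arguably slightly cleaner.
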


\begin{proof}
The following proof of \ref{Y1}, bearing in mind (\ref{def_complementar}), uses \ref{space_homo}, \ref{space_1} and \ref{space_0}:
\begin{eqnarray*}
q(x,\overline{a},y)&=&q(x,p(1,a,0),y)\\
                   &=&q(q(x,1,y),a,q(x,0,y))\\
                   &=&q(y,a,x).
\end{eqnarray*}
\ref{Y2} follows directly from \ref{alg_mu} and \ref{Y1}.
Beginning with (\ref{def_product}), \ref{Y3} is a consequence of \ref{space_homo} and \ref{space_0}:
\begin{eqnarray*}
q(x,a\cdot b,y)&=&q(x,p(0,a,b),y)\\
          &=&q(q(x,0,y),a,q(x,b,y))\\
          &=&q(x,a,q(x,b,y)).
\end{eqnarray*}
Considering (\ref{circ_cdot}), property \ref{Y4} follows from \ref{Y1} and \ref{Y3}.
\begin{eqnarray*}
q(q(x,a,y),b,y)&=&q(y,\overline{b},q(y,\overline{a},x))\\
               &=&q(y,\overline{b}\cdot\overline{a},x)\\
               &=&q(y,\overline{a\circ b},x)\\
               &=&q(x,a\circ b,y).
\end{eqnarray*}
Considering $(\ref{def_oplus})$, \ref{Y7} is just a particular case of \ref{space_homo}.
To prove \ref{Y6}, we use \ref{space_homo}, (\ref{B130}) and \ref{space_0}.
\begin{eqnarray*}
q(x,\muu,q(x,a,y))&=&q(q(x,0,y),\muu,q(x,a,y))\\
                 &=&q(x,p(0,\muu,a),y)\\
                 &=&q(x,p(0,a,\muu),y)\\
                 &=&q(q(x,0,y),a,q(x,\muu,y))\\
                 &=&q(x,a,q(x,\muu,y)).
\end{eqnarray*}
The following proof of \ref{Y5} is based on \ref{Y1}, \ref{space_homo} and (\ref{B7});
\begin{eqnarray*}
q(q(x,a,y),\muu,q(y,a,x))&=&q(q(x,a,y),\muu,q(x,\overline{a},y))\\
                        &=&q(x,p(a,\muu,\overline{a}),y)\\
                        &=&q(x,\muu,y).
\end{eqnarray*}
To prove \ref{Y8}, we start with the important property (\ref{important}) of the underlying mobi algebra and then get:
\begin{eqnarray*}
& &q(x,\muu \cdot p(a,b,c),y)=q(x,\overline{b}\cdot a\oplus b\cdot c,y)\\
&\Rightarrow& q(x,p(0,\muu,p(a,b,c)),y)=q(x,p(\overline{b}\cdot a, \muu,b\cdot c),y)\\
&\Rightarrow& q(x,\muu,q(x,p(a,b,c),y))=q(q(x,\overline{b}\cdot a,y),\muu,q(x,b\cdot c,y))\\
&\Rightarrow& q(x,\muu,q(x,p(a,b,c),y))\\
& &=q(q(x,\overline{b},q(x,a,y)),\muu,q(x,b,q(x,c,y))).
\end{eqnarray*}
It is easy to see that $\ref{Y9}$ is a direct consequence of $\ref{Y5}$ and \ref{space_idem}, while $\ref{Y10}$ is a consequence of \ref{space_idem} and \ref{space_homo}. 

\end{proof}


\section{Examples}\label{sec_examples} 

In this section, several examples of mobi spaces are presented. In each case, besides the mobi algebra $(A, p, 0, \muu, 1)$, we present a set~$X$ and a ternary operation $q(x,a,y)\in X$, for all $x,y\in X$ and \mbox{$a\in A$}, verifying the axioms of Definition \ref{mobi_space}.
When the underlying mobi algebra structure corresponds to $A=[0,1]$ with the three constants $0, \frac{1}{2}, 1$ and $p(a,b,c)=a-b a+b c$ for all $a,b,c\in A$, we refer to it as the {\it canonical mobi algebra}. When this canonical algebra is extended to $A=\R$ we refer to the mobi algebra as the {\it real line}. The first five examples are borrowed from \cite{preprint2} where each example is inserted in an appropriate context. In particular, Example \ref{projectiles} is related to projectile motion in $\R^n$ and Example \ref{criticaldamping} with the motion of a critically damped harmonic oscillator.
The list of examples follows:

\begin{enumerate}[label={\arabic*}, leftmargin=*]
\item\label{example1} - Vector spaces provide examples of mobi spaces over the canonical mobi algebra. For instance, the \emph{canonical mobi space} on $\R^n$ corresponds to
\[X=\R^n\quad (n\in\mathbb{N})\]
and
\[q(x,a,y)=x+a\,(y-x).\]

\item\label{ex11}\label{projectiles} - For any $k\in\R^n$, we have a mobi space $(X,q)$ over the canonical mobi algebra by taking the set
\[X=\mathbb{R}^{n+1}\]
with the formula
\begin{equation*}\label{eqprojectiles}
\begin{array}{l}
q((x,s),a,(y,t))=\\[0.3mm]
\left(x+a (y-x)+a (1-a)(t-s)^2 k, s+a (t-s)\right).
\end{array}
\end{equation*} 


\item\label{criticaldamping} - For any $\alpha\in\R$, we obtain the following mobi space $(\mathbb{R}^{n+1},q)$ over the canonical mobi algebra 
with the formula
$$
\begin{array}{l}
q((x,s),a,(y,t))=\\[3mm]
\left((1-a)\,x\,e^{\alpha a (t-s)}+a\, y\,e^{\alpha (1-a) (s-t)}, s+a(t-s)\right).
\end{array}$$

\item\label{ex-general} -
The previous three examples can be extended to the real line which will be convenient when used to illustrate the theorems of Section~\ref{sec:modules} where the inverse of $\muu$ is needed. The previous examples are in fact all isomorphic. Indeed, they can be obtained from the canonical example on $X=\R^{n+1}$ with
$q((x,s),a,(y,t))=(1-a) (x,s)+a (y,t)$
through a bijective map $F:X\to X$ leading up to the new structure:
$$q((x,s),a,(y,t))=F\left((1-a)F^{-1}(x,s)+a F^{-1}(y,t)\right).$$
If we consider that $F(x,s)=\left(\lambda(s) x-K(s),s\right)$, with $\lambda(s)\neq0$ \mbox{$\forall s\in \R$}, 
we get
\begin{eqnarray*}
q((x,s),a,(y,t))&=&\Big(\lambda\left((1-a)s+a t\right)\,(1-a)\ \dfrac{x+K(s)}{\lambda(s)}\\
&&+\lambda\left((1-a)s+a t\right)\, a\ \dfrac{y+K(t)}{\lambda(t)}\\
&&-K\left((1-a)s+a t\right),(1-a)s+a t\Big).
\end{eqnarray*}
Example \ref{projectiles} is reached with $\lambda(t)=1$ and $K(t)=k t^2$ while Example~\ref{criticaldamping} is obtained for $\lambda(t)=e^{\alpha t}$ and $K(t)=0$.

\medskip
\item\label{ex7} - Considering the set $X=\mathbb{R^n}\times\mathbb{R}^+$ with the formula
\[q((x,s),a,(y,t))=\left(x+(y-x)\frac{a\,t}{s+a(t-s)},s+a(t-s)\right),\]
$(X,q)$ is a mobi space over the canonical mobi algebra.
This example is isomorphic to the canonical mobi space restricted to $X=\R^n\times\R^+$ through the map 
$F(x,s)=\left(\dfrac{x}{s},s\right)$, using the notation of Example~\ref{ex-general}. In this case, however, the extension to $a\in\R$ is not possible as $s+a(t-s)$ could then lay outside $\R^+$.

\medskip
\item\label{ex12}\label{ex_lozenge}
- So far, the examples are mobi spaces over one-dimensional mobi algebras. 
Consider now the mobi algebra $(A,p,0,\muu,1)$ with, for any $k\in\R^+$,
$$A=\left\{(a_1,a_2)\in\mathbb{R}^2\colon \sqrt{k}\,\vert a_2\vert \leq a_1 \leq 1-\sqrt{k}\,\vert a_2\vert\right\}$$
$$0=(0,0)\,;\,\muu=\left(\frac{1}{2},0\right)\,;\,1=(1,0)$$
\begin{eqnarray*}
p(a,b,c)&=&\big(a_1+b_1 (c_1-a_1)+k\, b_2 (c_2-a_2),\\
         &&\ a_2+b_1 (c_2-a_2)+b_2 (c_1-a_1)\big).
\end{eqnarray*}
Then $(X,q)$, with
$X=[0,1]$ and $$q(x,(a_1,a_2),y)= x+(a_1\pm\sqrt k\,a_2)(y-x)$$ is a mobi space.

\medskip
\item\label{ex_lozenge2}
- The operation $p$ of the previous example can be considered for any $k\in \R$. Then, for $A=\R^2$, $0=(0,0)$, $\muu=(\frac{1}{2},0)$ and $1=(1,0)$, 
$(A,p,0,\muu,1)$ is a mobi algebra. Note that, in this case, the product defined in~(\ref{def_product}) is given by
$$(a_1,a_2)\cdot(b_1,b_2)=p(0,a,b)=(a_1 b_1+k\,a_2 b_2,a_1 b_2+a_2 b_1),$$
which corresponds, for $k=-1$, $k=0$ and $k=1$, to the multiplication of complex numbers, dual numbers and split-complex numbers respectively.
This suggests to consider $$X=\left\{x_1+h\,x_2\mid \, x_1,x_2\in\R,\, h^2=k\right\}$$ 
and $$q(x,(a_1,a_2),y)= x+(a_1+h\,a_2)(y-x),$$
getting, indeed, a mobi space $(X,q)$ over $(\R^2,p,0,\muu,1)$.

\medskip
\item\label{ex-noncommut} - Consider the mobi algebra $\big(\R^3,p,(0,0,0),\left(\frac{1}{2},0,\frac{1}{2}\right),(1,0,1)\big)$ with
\[\begin{array}{ll}
p(a,b,c)=\big(&a_1+b_1 (c_1-a_1),a_2+b_1 (c_2-a_2)+b_2 (c_3-a_3),\\
         &a_3+b_3 (c_3-a_3)\big).
\end{array}
\]
Then $\left(\R^2,q\right)$ is a mobi space over this mobi algebra with
$$q(x,a,y)=\big(x_1+a_1(y_1-x_1)+a_2(y_2-x_2),x_2+a_3(y_2-x_2)\big).$$
Remark: The multiplication $p(0,a,b)=\left(a_1 b_1,a_1 b_2+a_2 b_3,a_3 b_3\right)$ of the mobi algebra is not commutative and therefore this affine mobi space is not isomorphic to the canonical one.

\medskip
\item\label{ex-nonaffine} - Consider the set
$X=\mathbb{C}^n\times\R$
and the following ternary operation on $X$ ($i$ is the imaginary unit):
\begin{equation*}
q((x,s),a,(y,t))=
\left(x+(y-x)\,a\,\dfrac{(2-a) s+a t+i}{s+t+i},s+a(t-s)\right).
\end{equation*}
Then $(X,q)$ is a mobi space over the real line.
This operation $q$ does not verify the affine condition (\ref{affine_1}) because, for instance,
$$\begin{array}{ll}
&q\big(q\left((0,0),-\frac{1}{2},(0,1)\right),\frac{1}{2},q\left((x,0),-\frac{1}{2},(0,0)\right)\big)\\[4pt]
&-q\big(q\left((0,0),\frac{1}{2},(x,0)\right),-\frac{1}{2},q\left((0,1),\frac{1}{2},(0,0)\right)\big)\\[4pt]
&=\left(\frac{3}{20} x,0\right)
\end{array}$$

\item\label{ex-nonaffine2} - We can consider the set
\[X=\mathbb{R}^{n+1}\]
and the formula
$$
\begin{array}{l}
q((x,s),a,(y,t))=\\[3mm]
\left(x+a\,(y-x)\,\dfrac{3\,s^2+3\,s(t-s)a+(t-s)^2\,a^2+1}{s^2+s\,t+t^2+1},s+a(t-s)\right).
\end{array}
$$
$(X,q)$ is a non-affine mobi space over the canonical mobi algebra or over the real line. 
\end{enumerate}

All the examples of this section, except Examples~\ref{ex-nonaffine} and~\ref{ex-nonaffine2}, verify the affine condition (\ref{affine_1}).
In \cite{preprint2}, the formula for geodesics on the $n$-sphere in terms of a mobi space structure as well as other examples of non-affine mobi spaces are given.
In the next two sections we analyse the case of affine mobi spaces and compare them with modules over a ring with one-half.

\section{Comparison  with R-modules}\label{sec:modules}

Consider a unitary ring $(R,+,\cdot,0,1)$. It has been proven \cite{mobi} that if $R$ contains the inverse of $1+1$, then it is a mobi algebra and if a mobi algebra $(A,p,0,\muu,1)$ contains the inverse of $\muu$, in the sense of the operation defined in (\ref{def_product}), then it is a unitary ring. In this section, we will compare a module over a ring with a mobi space over a mobi algebra. First, let us just recall that a module over a ring $R$ is a system $(M,+,e,\varphi)$, where $\varphi:R\to \End(M)$ is a map from $R$ to the usual ring of endomorphisms, such that $(M,+,e)$ is an abelian group and $\varphi$ is a ring homomorphism.

The following theorem shows how to construct a mobi space from a module over a ring containing the inverse of $2$.
\begin{theorem}\label{module2mobi}
Consider a module $(X,+,e,\varphi)$ over a unitary ring $(A,+,\cdot,0,1)$. If $A$ contains $(1+1)^{-1}=\muu$ then $(X,q)$ is an affine mobi space over the mobi algebra $(A,p,0,\muu,1)$, with
\begin{eqnarray}
p(a,b,c)&=&a+b c-b a\\
q(x,a,y)&=&\varphi_{1-a}(x)+\varphi_a(y).\label{q}
\end{eqnarray}
\end{theorem}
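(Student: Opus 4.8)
The plan is to verify each of the mobi space axioms \ref{space_0}--\ref{space_homo} together with the affine condition (\ref{affine_1}) directly from the definition $q(x,a,y)=\varphi_{1-a}(x)+\varphi_a(y)$, exploiting that $\varphi$ is a ring homomorphism into $\End(X)$ and that $(X,+,e)$ is an abelian group. The key facts I would extract first are the linearity relations $\varphi_{a+b}=\varphi_a+\varphi_b$, $\varphi_{ab}=\varphi_a\circ\varphi_b$, $\varphi_0=0$ (the zero endomorphism) and $\varphi_1=\mathrm{id}_X$, each of which follows from $\varphi$ being a ring homomorphism. These translate the ternary operation into purely additive manipulations, so that every axiom becomes an identity between sums of terms $\varphi_{(\cdot)}(x)$ and $\varphi_{(\cdot)}(y)$.

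First I would dispatch the easy axioms. For \ref{space_0}, $q(x,0,y)=\varphi_1(x)+\varphi_0(y)=x$; for \ref{space_1}, $q(y,1,x)=\varphi_0(y)+\varphi_1(x)=x$; for \ref{space_idem}, $q(x,a,x)=\varphi_{1-a}(x)+\varphi_a(x)=\varphi_1(x)=x$, using additivity of $\varphi$ in the subscript. For the cancellation axiom \ref{space_cancel}, I would compute $q(x,\muu,y)=\varphi_{\muu}(x)+\varphi_{\muu}(y)=\varphi_{\muu}(x+y)$ and note that $\varphi_{\muu}$ is invertible because $\muu$ is invertible in $A$ (its inverse is $1+1$, so $\varphi_{\muu}$ has inverse $\varphi_{1+1}$); hence $q(x,\muu,y)=q(x,\muu,y')$ forces $x+y=x+y'$ and thus $y=y'$.

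The substantive computations are \ref{space_homo} and the affine condition (\ref{affine_1}). For \ref{space_homo} I would expand both sides: the left-hand side $q(q(x,a,y),b,q(x,c,y))$ becomes $\varphi_{1-b}\bigl(\varphi_{1-a}(x)+\varphi_a(y)\bigr)+\varphi_b\bigl(\varphi_{1-c}(x)+\varphi_c(y)\bigr)$, which after using $\varphi_{uv}=\varphi_u\varphi_v$ and additivity collects into $\varphi_{(1-b)(1-a)+b(1-c)}(x)+\varphi_{(1-b)a+bc}(y)$; the right-hand side $q(x,p(a,b,c),y)$ is $\varphi_{1-p(a,b,c)}(x)+\varphi_{p(a,b,c)}(y)$, and since $p(a,b,c)=a+bc-ba$ one checks the coefficients of $x$ and $y$ match. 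The affine condition is handled the same way, and here the abelian/commutativity of $(X,+)$ is exactly what makes the cross terms line up; I would expand both sides into sums of four terms $\varphi_{(\cdot)}(x_i)$, $\varphi_{(\cdot)}(y_i)$ and compare coefficients.

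The main obstacle I anticipate is purely bookkeeping: keeping the coefficient arithmetic in $A$ straight while respecting that $A$ need \emph{not} be commutative, so the order of multiplication in subscripts like $(1-b)(1-a)$ versus $(1-a)(1-b)$ must be preserved to match $p(a,b,c)=a+bc-ba$ rather than $a+bc-ab$. Commutativity of the group $(X,+)$ is used freely to reorder the additive terms, but ring commutativity is never invoked, which is consistent with the paper's emphasis on non-commutative examples. Once the coefficient identities are verified, the theorem follows.
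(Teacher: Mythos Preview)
Your proposal is correct and follows essentially the same approach as the paper's own proof: both verify \ref{space_0}--\ref{space_homo} and the affine condition (\ref{affine_1}) by direct expansion using that $\varphi$ is a ring homomorphism and $(X,+)$ is abelian. The only cosmetic difference is in \ref{space_cancel}, where you invoke invertibility of $\varphi_{\muu}$ via $\varphi_{1+1}$ while the paper doubles to obtain $\varphi_1$; these are the same argument, and your remark about tracking non-commutativity of $A$ in the subscript arithmetic (noting that $\muu$, being the inverse of the central element $1+1$, is itself central) is exactly the care the computation requires.
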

\begin{proof}
$(A,p,0,\muu,1)$ is a mobi algebra by Theorem 7.2 of \cite{mobi}. We show here that the axioms of Definition \ref{mobi_space}, as well as (\ref{affine_1}), are verified. The first three axioms are easily proved:
\begin{eqnarray*}
q(x,0,y)&=&\varphi_1(x)+\varphi_0(y)=x+ e=x\\
q(x,1,y)&=&\varphi_0(x)+\varphi_1(y)=e+ y=y\\
q(x,a,x)&=&\varphi_{1-a}(x)+\varphi_a(x)=\varphi_{1-a+a}(x)=\varphi_1(x)=x.
\end{eqnarray*}
Axiom \ref{space_cancel} is due to the fact that $\muu+\muu=1$ and consequently
\begin{eqnarray*}
\varphi_\muu(y_1)=\varphi_\muu(y_2)&\Rightarrow& \varphi_\muu(y_1)+\varphi_\muu(y_1)=\varphi_\muu(y_2)+\varphi_\muu(y_2)\\                                   &\Rightarrow& \varphi_1(y_1)=\varphi_1(y_2)\Rightarrow y_1=y_2.
\end{eqnarray*}
Next, we give a proof of Axiom \ref{space_homo}. It is relevant to notice that, besides other evident properties of the module $X$, the associativity of~$+$ plays an important part in the proof:
\begin{eqnarray*}
&&q(q(x,a,y),b,q(x,c,y))\\
&=&\varphi_{1-b}(\varphi_{1-a}(x)+\varphi_a(y))+\varphi_b(\varphi_{1-c}(x)+\varphi_c(y))\\
&=&\varphi_{1-b}(\varphi_{1-a}(x))+\varphi_{1-b}(\varphi_a(y))+\varphi_b(\varphi_{1-c}(x))+\varphi_b(\varphi_c(y))\\
&=&\varphi_{(1-b)(1-a)}(x)+\varphi_{b(1-c)}(x)+\varphi_{(1-b)a}(y)+\varphi_{bc}(y)\\
&=&\varphi_{1-a+ba-bc}(x)+\varphi_{a-ba+bc}(y)\\
&=&\varphi_{1-p(a,b,c)}(x)+\varphi_{p(a,b,c)}(y)\\
&=&q(x,p(a,b,c),y).
\end{eqnarray*}
It remains to prove (\ref{affine_1}):
\begin{eqnarray*}
&&q(q(x_1,a,y_1),\muu,q(x_2,a,y_2))\\
&=&\varphi_{\muu}(\varphi_{1-a}(x_1)+\varphi_a(y_1))+\varphi_\muu(\varphi_{1-a}(x_2)+\varphi_a(y_2))\\
&=&\varphi_{\muu}(\varphi_{1-a}(x_1)+\varphi_a(y_1)+\varphi_{(1-a)}(x_2)+\varphi_a(y_2))\\
&=&\varphi_{\muu}(\varphi_{1-a}(x_1+ x_2)+\varphi_a(y_1+ y_2))\\
&=&\varphi_{(1-a)\muu}(x_1+ x_2)+\varphi_{a\muu}(y_1+ y_2)\\
&=&\varphi_{(1-a)}(\varphi_\muu(x_1)+\varphi_\muu(x_2))+\varphi_a(\varphi_{\muu}(y_1)+\varphi_{\muu}(y_2))\\
&=&\varphi_{(1-a)}(q(x_1,\muu,x_2))+\varphi_{a}(q(y_1,\muu,y_2))\\
&=&q(q(x_1,\muu,x_2)),a,q(y_1,\muu,y_2)).
\end{eqnarray*}
\end{proof}

\begin{theorem}\label{mobi2module}
Consider an affine mobi space $(X,q)$, with a fixed chosen element $e\in X$, over a mobi algebra $(A,p,0,\muu,1)$. If $A$ contains $2$ such that $p(0,\muu,2)=1$ then $(X,+,e,\varphi)$ is a module over the unitary ring $(A,+,\cdot,0,1)$, with
\begin{eqnarray}
a+b&=&p(0,2,p(a,\muu,b))\\
a\cdot b&=&p(0,a,b)\\
\varphi_a(x)&=&q(e,a,x)\\
x+ y&=&q(e,2,q(x,\muu,y))=\varphi_2(q(x,\muu,y)).
\end{eqnarray}
\end{theorem}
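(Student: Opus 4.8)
The ring structure on $A$ is precisely Proposition \ref{thm: mobi=ring} (and, as there, $2=1+1$ is central with $2\cdot\muu=\muu\cdot 2=1$), so the task is to equip $X$ with an abelian group structure for which $\varphi$ is a ring homomorphism into $\End(X)$. Throughout I would write $\varphi_a=q(e,a,-)$ and abbreviate the midpoint as $m(x,y)=q(x,\muu,y)$, and the plan rests on recording a few identities at the outset. By \ref{space_0} and \ref{space_1}, $\varphi_0$ is constant equal to $e$ and $\varphi_1=\mathrm{id}_X$; by \ref{Y3}, $\varphi_a\varphi_b=\varphi_{a\cdot b}$; consequently $\varphi_2$ and $\varphi_\muu$ are mutually inverse since $\varphi_2\varphi_\muu=\varphi_{2\cdot\muu}=\varphi_1=\mathrm{id}_X$ (and likewise in the other order). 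The operation $m$ is commutative by \ref{Y2}, idempotent by \ref{space_idem}, and satisfies $\varphi_\muu x=m(x,e)=m(e,x)$, again by \ref{Y2}. The decisive input is that the affine condition (\ref{affine_1}) makes each $\varphi_a$ a homomorphism for $m$, that is $m(\varphi_a x,\varphi_a y)=\varphi_a\,m(x,y)$ (take $x_1=x_2=e$); specialising $a=\muu$ shows $m$ is medial.

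Granting these, the ring-homomorphism conditions are short. I already have $\varphi_1=\mathrm{id}_X$ and $\varphi_{a\cdot b}=\varphi_a\varphi_b$. Additivity in the scalar, $\varphi_{a+b}(x)=\varphi_a(x)+\varphi_b(x)$, comes from $m(\varphi_a x,\varphi_b x)=\varphi_{p(a,\muu,b)}(x)$ (this is \ref{Y7}) followed by $\varphi_2$, using $a+b=2\cdot p(a,\muu,b)$. Additivity of each $\varphi_a$ in the vector, $\varphi_a(x+y)=\varphi_a(x)+\varphi_a(y)$, follows by writing $\varphi_a(x+y)=\varphi_a\varphi_2\,m(x,y)=\varphi_2\varphi_a\,m(x,y)$ (centrality of $2$) and then applying the $m$-homomorphism property of $\varphi_a$; note $\varphi_a(e)=q(e,a,e)=e$ by \ref{space_idem}, so indeed $\varphi_a\in\End(X)$.

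For the group laws, commutativity of $+$ is \ref{Y2}; the identity law is $x+e=\varphi_2\,m(x,e)=\varphi_2\varphi_\muu x=x$; and $-x:=\varphi_{-1}(x)$ is an inverse, since $m(\varphi_1 x,\varphi_{-1}x)=\varphi_{p(1,\muu,-1)}(x)=\varphi_0(x)=e$ because $p(1,\muu,-1)=0$, whence $x+(-x)=\varphi_2(e)=e$ by \ref{space_idem}. The one genuinely delicate point, and the step I expect to be the main obstacle, is associativity, which is exactly where mediality is needed. The key rewriting is $m(\varphi_2 w,z)=\varphi_2\,m(w,\varphi_\muu z)$, obtained by writing $z=\varphi_2\varphi_\muu z$ and invoking that $\varphi_2$ is an $m$-homomorphism; combined with $\varphi_\muu z=m(z,e)$ it gives $(x+y)+z=\varphi_2 m(\varphi_2 m(x,y),z)=\varphi_2^2\,m\bigl(m(x,y),m(z,e)\bigr)$, and symmetrically $x+(y+z)=\varphi_2^2\,m\bigl(m(x,e),m(y,z)\bigr)$.

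Finally, mediality and commutativity of $m$ reconcile the two bracketed terms: $m(m(x,e),m(y,z))=m(m(x,y),m(e,z))=m(m(x,y),m(z,e))$, so $(x+y)+z=x+(y+z)$. Assembling the abelian group axioms with the three homomorphism conditions then shows $(X,+,e,\varphi)$ is an $A$-module, as claimed; the whole argument uses the affine hypothesis only through the $m$-homomorphism property of the maps $\varphi_a$, which is precisely what fails for a non-affine mobi space.
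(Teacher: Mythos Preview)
Your proof is correct and close in spirit to the paper's, but you organize it around two lemmas extracted up front---that each $\varphi_a$ is an $m$-homomorphism and that $m$ is medial---whereas the paper invokes the raw affine identity (\ref{affine_1}) directly inside each computation. For associativity this makes your argument tidier: the paper computes a chain beginning with $q(e,\muu,q(e,\muu,(x+y)+z))$, applies (\ref{affine_1}) several times to rearrange midpoints, and then cancels twice via \ref{space_cancel}; you instead factor out $\varphi_2^2$ on both sides and reduce the equality to a single instance of mediality plus commutativity of $m$. Your inverse $\varphi_{-1}(x)$ agrees with the paper's $q(x,2,e)$ since $\overline{-1}=2$, and your use of the centrality of $2$ for the additivity of $\varphi_a$ is a pleasant substitute for one of the paper's two applications of (\ref{affine_1}) in that step. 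One small wording point: when you write ``specialising $a=\muu$ shows $m$ is medial,'' you mean specialising in the original identity (\ref{affine_1}), not in the $\varphi_a$-homomorphism property just derived with $x_1=x_2=e$; as written this is momentarily ambiguous.
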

\begin{proof}
$(A,+,\cdot,0,1)$ is a unitary ring by Theorem 7.1 of \cite{mobi}. We prove here that $(X,+,e,\varphi)$ is a module over $A$. First, we observe that, using in particular \ref{Y3} of Proposition \ref{properties_space}, we have:
\begin{eqnarray*}
q(e,\muu,x+y)&=& q(e,\muu,q(e,2,q(x,\muu,y)))\\
&=& q(e,\muu\cdot 2,q(x,\muu,y))\\
                            &=& q(e,1,q(x,\muu,y))\\
														&=& q(x,\muu,y).
\end{eqnarray*}
Then, the property (\ref{affine_1}) of an affine mobi space is essential to prove the associativity of the operation $+$ of the module:
\begin{eqnarray*}
q(e,\muu,q(e,\muu,(x+ y)+ z))&=&q(q(e,\muu,e),\muu,q(x+ y,\muu,z))\\
&=& q(q(e,\muu,x+ y),\muu,q(e,\muu,z))\\
&=& q(q(x,\muu,y),\muu,q(e,\muu,z))\\
&=& q(q(x,\muu,e),\muu,q(y,\muu,z))\\
&=& q(q(e,\muu,x),\muu,q(e,\muu,y+ z))\\
&=& q(q(e,\muu,e),\muu,q(x,\muu,y+ z))\\
&=& q(e,\muu,q(e,\muu,x+(y+ z)))\\
\end{eqnarray*}
which, by \ref{space_cancel}, implies that $(x+ y)+ z=x+(y+ z)$. Commutativity of $+$ and the identity nature of $e$ are easily proved:
\begin{eqnarray*}
q(e,\muu,e+ x)&=&q(e,\muu,x)\Rightarrow e+ x=x\\
q(e,\muu,x+ y)&=&q(x,\muu,y)=q(y,\muu,x)\\
                    &=&q(e,\muu,y+ x)\Rightarrow x+ y=y+ x.
\end{eqnarray*}
Cancellation is achieved with $-x=q(x,2,e)$. Indeed:
\begin{eqnarray*}
q(e,\muu,x+q(x,2,e))&=&q(x,\muu,q(x,2,e))\\
                    &=&q(q(x,0,e),\muu,q(x,2,e))\\
                    &=&q(x,p(0,\muu,2),e)\\
                    &=&q(x,1,e)\\
                    &=&e=q(e,\muu,e).\\
\end{eqnarray*}
To prove that $\varphi_a(x+y)=\varphi_a(x)+\varphi_a(y)$, we will again need (\ref{affine_1}):
\begin{eqnarray*}
q(e,\muu,\varphi_a(x+y))&=& q(e,\muu,q(e,a,x+ y))\\    
											 &=& q(q(e,a,e),\muu,q(e,a,x+ y))\\   
											 &=& q(q(e,\muu,e),a,q(e,\muu,x+ y))\\   
											 &=& q(e,a,q(x,\muu,y))\\ 
											 &=& q(q(e,a,x),\muu,q(e,a,y))\\  
											 &=& q(\varphi_a(x),\muu,\varphi_a(y))\\ 
											 &=& q(e,\muu,\varphi_a(x)+\varphi_a(y)).
\end{eqnarray*}
To prove that $\varphi_{a+b}(x)=\varphi_a(x)+\varphi_b(x)$, let us first recall that, in a mobi algebra with 2 and $a+b=p(0,2,p(a,\muu,b))$, we have the following property:
$$p(0,\muu,a+b)=p(a,\muu,b).$$
We then have
\begin{eqnarray*}
q(e,\muu,\varphi_{a+b}(x))&=& q(e,\muu,q(e,a+b,x))\\    
													&=& q(q(e,0,x),\muu,q(e,a+b,x))\\    
													&=& q(e,p(0,\muu,a+b),x)\\    
													&=& q(e,p(a,\muu,b),x)\\    
													&=& q(q(e,a,x),\muu,q(e,b,x))\\    
													&=& q(\varphi_a(x),\muu,\varphi_b(x))\\
													&=& q(e,\muu,\varphi_a(x)+\varphi_b(x)).
\end{eqnarray*}
The last two properties are easily proved:
$$\varphi_{a\cdot b}(x)=q(e,a\cdot b,x)=q(e,a,q(e,b,x))=\varphi_a(\varphi_b(x))$$
$$\varphi_1(x)=q(e,1,x)=x.$$
\end{proof}

\begin{proposition}\label{module2module}
Consider a R-module $(X,+,e,\varphi)$ within the conditions of Theorem \ref{module2mobi} and the corresponding mobi space $(X,q)$. Then the R-module obtained from $(X,q)$ by Theorem \ref{mobi2module} is the same as $(X,+,e,\varphi)$.
\end{proposition}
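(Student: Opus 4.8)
The plan is to compare, component by component, the module $(X,+',e,\varphi')$ produced by Theorem~\ref{mobi2module} — where I write $+'$ and $\varphi'$ for its operations to distinguish them from the original ones — with the starting module $(X,+,e,\varphi)$, and to show that all data coincide. The underlying sets agree trivially, and I would fix the element $e$ required as input to Theorem~\ref{mobi2module} to be the neutral element $e$ of the original module. Since the mobi algebra at hand is $(R,p,0,\muu,1)$ obtained from the ring $R$, Proposition~\ref{thm: mobi=ring} guarantees that the scalar ring reconstructed inside Theorem~\ref{mobi2module} is again $R$ (and not merely an isomorphic copy); hence both structures are modules over the same ring, and it remains only to match $\varphi$ and $+$.

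First I would check the scalar action. By definition $\varphi'_a(x)=q(e,a,x)$, and expanding $q$ via~(\ref{q}) gives $\varphi'_a(x)=\varphi_{1-a}(e)+\varphi_a(x)$. Because each $\varphi_s$ is a group endomorphism of $(X,+,e)$, it fixes the neutral element, so $\varphi_{1-a}(e)=e$ and therefore $\varphi'_a=\varphi_a$ for every $a\in R$; in particular $\varphi'_2=\varphi_2$.

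Next I would check the addition. Using $1-\muu=\muu$ (which holds since $\muu+\muu=1$) together with~(\ref{q}), one obtains $q(x,\muu,y)=\varphi_\muu(x)+\varphi_\muu(y)$. The reconstructed sum is $x+'y=\varphi'_2(q(x,\muu,y))$, and since $\varphi'_2=\varphi_2$ is additive and $\varphi$ is a ring homomorphism with $2\cdot\muu=1$, this expands to $x+'y=\varphi_{2\muu}(x)+\varphi_{2\muu}(y)=\varphi_1(x)+\varphi_1(y)=x+y$. Thus $+'=+$, completing the identification $(X,+',e,\varphi')=(X,+,e,\varphi)$.

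I do not expect a substantial obstacle: the argument is a routine unwinding of the two constructions of Theorems~\ref{module2mobi} and~\ref{mobi2module}. The only points demanding care are bookkeeping ones — choosing the origin in Theorem~\ref{mobi2module} to be the original neutral element $e$, invoking Proposition~\ref{thm: mobi=ring} so that the reconstructed scalars form literally $R$, and tracking which ring identities are used (namely $2\muu=1$, $1-\muu=\muu$, and the additivity of $\varphi_2$, so that $\varphi_2\circ\varphi_\muu=\varphi_1=\operatorname{id}$).
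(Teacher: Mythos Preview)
Your proposal is correct and follows essentially the same route as the paper: both verify $\varphi'_a(x)=\varphi_{1-a}(e)+\varphi_a(x)=\varphi_a(x)$ using $\varphi_{1-a}(e)=e$, and both compute $x+'y=\varphi_2(\varphi_\muu(x)+\varphi_\muu(y))=\varphi_{2\muu}(x)+\varphi_{2\muu}(y)=x+y$ via additivity of $\varphi_2$ and the identity $2\cdot\muu=1$. Your only additions --- fixing the origin explicitly and invoking Proposition~\ref{thm: mobi=ring} to ensure the reconstructed scalar ring is literally $R$ --- are minor bookkeeping clarifications rather than a different argument.
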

\begin{proof}
From $(X,q)$, we define
$$x+'y=q(e,2,q(x,\muu,y))\ \textrm{and}\ \varphi_a'(x)=q(e,a,x)$$
and obtain the following equalities:
\begin{eqnarray*}
x+'y&=&e+\varphi_2(q(x,\muu,y))\\
    &=&\varphi_2(\varphi_\muu(x)+\varphi_\muu(y))\\
		&=&\varphi_{2\cdot\muu}(x)+\varphi_{2\cdot\muu}(y)\\
		&=&x+y
\end{eqnarray*}
$$\varphi'_a(x)=\varphi_{1-a}(e)+\varphi_a(x)=e+\varphi_a(x)=\varphi_a(x).$$
\end{proof}

\begin{proposition}\label{mobi2mobi}
Consider an affine mobi space $(X,q)$ within the conditions of Theorem \ref{mobi2module} and the corresponding module $(X,+,e,\varphi)$. Then the affine mobi space obtained from $(X,+,e,\varphi)$ by Theorem \ref{module2mobi} is the same as $(X,q)$.
\end{proposition}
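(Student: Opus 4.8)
The statement to establish is that the operation $q'$ reconstructed from the module coincides with $q$. By the recipe of Theorem \ref{module2mobi} applied to the module $(X,+,e,\varphi)$ built in Theorem \ref{mobi2module}, this reconstructed operation is $q'(x,a,y)=\varphi_{1-a}(x)+\varphi_a(y)$, where $1-a$ is computed in the ring $(A,+,\cdot,0,1)$ and $+$ is the module addition $u+v=q(e,2,q(u,\muu,v))$. My plan is to unfold $q'(x,a,y)$ entirely in terms of $q$, apply $q(e,\muu,-)$ to both $q'(x,a,y)$ and $q(x,a,y)$, show the two results coincide, and conclude by the cancellation axiom \ref{space_cancel}.

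First I would identify the ring difference $1-a$ with $\overline{a}=p(1,a,0)$. Since $p(a,\muu,\overline{a})=\muu$ by \eqref{B7} and $2\cdot\muu=\muu\cdot 2=1$ by \eqref{B130}, one gets $a+\overline{a}=p(0,2,p(a,\muu,\overline{a}))=p(0,2,\muu)=1$, so $\overline{a}=1-a$ in the ring and hence $\varphi_{1-a}(x)=q(e,\overline{a},x)$. Using \ref{Y1} together with the involutivity of $\overline{(\ )}$, this rewrites as $q(e,\overline{a},x)=q(x,a,e)$, so that $q'(x,a,y)=q(e,2,q(q(x,a,e),\muu,q(e,a,y)))$. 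Applying $q(e,\muu,-)$ and invoking \ref{Y3} with $\muu\cdot 2=1$ and \ref{space_1} (so that $q(e,\muu,q(e,2,w))=q(e,1,w)=w$), I obtain $q(e,\muu,q'(x,a,y))=q(q(x,a,e),\muu,q(e,a,y))$.

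The crux is the next step. After the flip via \ref{Y1}, the two inner factors carry the \emph{same} scalar $a$, so the affine axiom \ref{affine_1} applies and converts $q(q(x,a,e),\muu,q(e,a,y))$ into $q(q(x,\muu,e),a,q(e,\muu,y))$, which by \ref{Y2} equals $q(q(e,\muu,x),a,q(e,\muu,y))$. A second use of \ref{affine_1}, in the instance $q(q(e,a,e),\muu,q(x,a,y))=q(q(e,\muu,x),a,q(e,\muu,y))$ with $q(e,a,e)=e$ by \ref{space_idem}, shows that this common expression is precisely $q(e,\muu,q(x,a,y))$. Combining the two computations gives $q(e,\muu,q'(x,a,y))=q(e,\muu,q(x,a,y))$, and cancellation \ref{space_cancel} yields $q'(x,a,y)=q(x,a,y)$ for all $x,y\in X$ and $a\in A$.

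I expect the main obstacle to be the alignment step rather than any of the bookkeeping: the reconstruction formula $\varphi_{1-a}(x)+\varphi_a(y)$ naturally produces the mismatched pair of scalars $\overline{a}$ and $a$, for which \ref{affine_1} cannot be applied directly. The whole argument hinges on the trick of using \ref{Y1} to replace $q(e,\overline{a},x)$ by $q(x,a,e)$, which restores a common scalar and makes the affine condition usable twice; once this is in place, the remaining identities are routine consequences of the derived properties of Proposition \ref{properties_space}.
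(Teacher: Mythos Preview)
Your proof is correct and follows essentially the same route as the paper's: both rewrite $q'(x,a,y)$ as $q(e,2,q(q(x,a,e),\muu,q(e,a,y)))$ via \ref{Y1}, then invoke the affine identity \eqref{affine_1} twice together with the relation $2\cdot\muu=1$. The only cosmetic difference is that you apply $q(e,\muu,-)$ at the outset and conclude by \ref{space_cancel}, whereas the paper keeps the outer $q(e,2,-)$ and passes directly from $q(e,2,q(u,a,v))$ to $q(q(e,2,u),a,q(e,2,v))$; your version simply makes the cancellation step underlying that passage explicit.
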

\begin{proof}
From $(X,+,e,\varphi)$, we define
$$ q'(x,a,y)=\varphi_{1-a}(x)+\varphi_a(y)$$
and obtain the following equalities:
\begin{eqnarray*}
q'(x,a,y)&=& q(e,\overline{a},x)+q(e,a,y)\\
         &=& q(x,a,e)+q(e,a,y)\\
         &=& q(e,2,q(q(x,a,e),\muu,q(e,a,y))).
\end{eqnarray*}
Now, because we are considering that $(X,q)$ is affine, we get:
\begin{eqnarray*}
q'(x,a,y)&=& q(q(e,a,e),2,q(q(x,\muu,e),a,q(e,\muu,y)))\\
         &=& q(q(e,2,q(e,\muu,x)),a,q(e,2,q(e,\muu,y)))\\
         &=& q(q(e,2\cdot \muu,x),a,q(e,2\cdot\muu,y))\\
				 &=& q(x,a,y).
\end{eqnarray*}
\end{proof}

We have completely characterized affine mobi spaces over a mobi algebra with $2$ 
in terms of modules over a unitary ring in which $2$ is invertible.
We end this section by taking a closer look to the examples of Section~\ref{sec_examples} trying to find the corresponding module if it exists. 
In Example~\ref{example1}, the mobi algebra does not contain the inverse of $\muu$ but, if we extend the mobi algebra to $(\R,p,0,\frac{1}{2},1)$, keeping the same $p$, then Theorem \ref{mobi2module} can be applied and the module obtained is the canonical vector space $\R^n$.
When considered over the real line, Examples \ref{projectiles} and~\ref{criticaldamping} may be obtained as a transport of the canonical structure (see Example~\ref{ex-general}), both having a physical interpretation \cite{preprint2}. More specifically, by Theorem \ref{mobi2module}, Example \ref{projectiles} leads to a module over $(\R,+,\cdot,0,1)$ given by $(\R^{n+1},+,0,\varphi)$ for $x,y,k\in\R^n$, $s,t\in\R$, with
\begin{eqnarray*}
(x,s)+(y,t)&=&(x+y-2 s t k,s+t)\\
\varphi_a(x,s)&=&(a x+a (1-a) s^2 k,a s).
\end{eqnarray*}
In the case of Example \ref{criticaldamping}, we get:
\begin{eqnarray*}
(x,s)+(y,t)&=&(x e^{\alpha t}+y e^{\alpha s},s+t)\\
\varphi_a(x,s)&=&(a x e^{-\alpha(1-a)s},a s).
\end{eqnarray*}
In both cases, using Theorem \ref{module2mobi}, we can construct a mobi space from the module and verify that it is the same as the original mobi space. 
Moreover, the following bijections 
\[f(x,s)=(x+ s (1-s) k,s)\]
and 
\[f(x,s)=(x\,e^{\alpha s},s)\]
can be used to map the canonical vector space $\R^{n+1}$ into the structures obtained in Example \ref{projectiles} and Example \ref{criticaldamping}, respectively.
Example \ref{ex7} does not contain the inverse of $\muu$ and is not well-defined outside the unit interval. Consequently, we are not able to get the structure of a module from it. The same happens to Example~\ref{ex_lozenge} but in this case it can be extended to Example~\ref{ex_lozenge2} where the inverse of $\muu$ is $(2,0)$. In this example, Theorem~\ref{mobi2module} gives, for \mbox{$e=0$}, the module $(X,+,e,\varphi)$ with the following operations for $a,b\in A$ and $x,y\in X$:
\begin{eqnarray*}
a\cdot b=(a_1 b_1+k\,a_2 b_2,a_1 b_2+a_2 b_1)& ; & a+b=(a_1+b_1,a_2+b_2)\\
                     \varphi_a(x)=(a_1+h\,a_2)\,x& ; & x+y=x+y.
\end{eqnarray*}
The set of scalars is a ring for all $k\in \R$ and a field for $k<0$.
In Example~\ref{ex-noncommut}, the inverse of $\muu$ is $(2,0,2)$. 
Theorem \ref{mobi2module} leads to a module over the ring $(\R^3,+,\cdot,(0,0,0),(1,0,1))$ given by $(\R^{2},+,(0,0),\varphi)$  with
\begin{eqnarray*}
a\cdot b=(a_1 b_1,a_1 b_2+a_2 b_3,a_3 b_3)&;&
a+b=(a_1+b_1,a_2+b_2,a_3+b_3)\\
\varphi_a(x)=(a_1 x_1+a_2 x_2,a_3 x_2)&;&
x+y=(x_1+y_1,x_2+y_2).
\end{eqnarray*}
Note that the ring here is isomorphic to the ring of $2\times2$ upper triangular matrices.

Theorem \ref{mobi2module} cannot be applied to Example \ref{ex-nonaffine} of Section \ref{sec_examples} because this example is not an affine mobi space. However, we can still find the corresponding $+$ operation and $\varphi_a$ functions. Choosing $e=(0,0)$, the results are the following:
\begin{eqnarray*}
\varphi_a(x,s)&=&\left(a\,x\,\frac{a s+i}{s+i},a s\right)\\
(x,s)+(y,t)&=&\left(x\,\frac{s+3t+2i}{s+t+2i}+y\,\frac{3s+t+2i}{s+t+2i},s+t\right).
\end{eqnarray*}
The operation $+$ is commutative, admits $e$ as an identity and the symmetric element $-(x,s)=\left(x\,\frac{s-i}{s+i},-s\right)$ for all $(x,s)\in X=\C^n\times\R$ but it is not associative and the following condition may not hold true:
$$\varphi_a\big((x,s)+(y,t)\big)=\varphi_{a}(x,s)+\varphi_{a}(y,t).$$
Example \ref{ex-nonaffine2} is also a non-affine mobi space and hence, if applying Theorem \ref{mobi2module}, we do not get a module over the real line.

Reanalysing the proofs of the previous Theorems we see that if condition~(\ref{affine_1}) is not satisfied then, for each choice of $e\in X$, we obtain a structure $(X,+,e,\varphi)$ that verifies all conditions for a module except associativity of $+$ and $\varphi_a$ being an homomorphism. However, this conditions may be satisfied for some particular cases of well chosen \emph{origins}~$e\in X$. This leads naturally to the study of which elements $e\in X$ can be chosen as the origin of a module. In \cite{ccm_magmas} this approach was used for the study of internal monoids in the category of midpoint algebras.

\section{The isomorphism between pointed affine mobi spaces and modules over a ring}\label{isomorphism}

We will now use the previous results to show that there is an isomorphism of categories between modules over a ring with $\muu$ and pointed affine mobi spaces over a mobi algebra with $2$.

The category of modules over a ring (with $\muu$) is the category whose objects are pairs $(R,X_R)$ in which $R$ is a unitary ring with $\muu$, the inverse of $2$, and 
$X_R=(X,+,e,\varphi)$ is a module over $R$, as in the previous section. Morphisms are pairs $(f,g)$ such that $f\colon{R\to R'}$ is a ring homomorphism and $g\colon{X\to X'}$ is such that
\begin{eqnarray}
g(e)&=&e\\
g(x+y)&=&g(x)+g(y)\\
g(\varphi_a(x))&=&\varphi_{f(a)}(g(x)).
\end{eqnarray} 

The category of pointed affine mobi spaces consists of pairs $(A,X_A)$ in which $A$ is a mobi algebra with $2$, such that $p(0,\muu,2)=1$, and $X_A=(X,q,e)$ is an affine mobi space $(X,q)$ over $A$ together with an element $e\in X$ considered as its origin. The morphisms in this category are pairs $(f,g)$ such that $f\colon{A\to A'}$ is a morphism of mobi algebras and $g\colon{X\to X'}$ is such that
\begin{eqnarray}
g(e)&=&e\\
g(q(x,a,y))&=&q'(g(x),f(a),g(y)).
\end{eqnarray} 

Note that the category of pointed affine mobi spaces can be seen as a slice category $(*\downarrow \textbf{Aff})$ and it is an example of an additive category with kernels \cite{Carboni,CarboniJanelidze}.
We are now in position to prove our main result.

\begin{theorem}\label{thm:main}
There is an isomorphism of categories between modules over a ring with $\muu$ and pointed affine mobi spaces over a mobi algebra with $2$.
\end{theorem}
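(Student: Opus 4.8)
The plan is to establish the isomorphism by exhibiting a pair of mutually inverse functors, built directly from the constructions already proven in Section \ref{sec:modules}. Let $\mathbf{Mod}$ denote the category of modules over a ring with $\muu$ and $\mathbf{Aff}_*$ the category of pointed affine mobi spaces over a mobi algebra with $2$. First I would define a functor $\Phi\colon{\mathbf{Mod}\to\mathbf{Aff}_*}$ on objects by sending $(R,X_R)$ to $(A,X_A)$, where $A$ is the mobi algebra associated to $R$ via Proposition \ref{thm: mobi=ring} and $X_A=(X,q,e)$ is the affine mobi space produced by Theorem \ref{module2mobi} with $q(x,a,y)=\varphi_{1-a}(x)+\varphi_a(y)$. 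In the reverse direction I would define $\Psi\colon{\mathbf{Aff}_*\to\mathbf{Mod}}$ sending $(A,X_A)$ to $(R,X_R)$, where $R$ is the ring associated to $A$ (again Proposition \ref{thm: mobi=ring}) and $X_R=(X,+,e,\varphi)$ is the module produced by Theorem \ref{mobi2module} with $\varphi_a(x)=q(e,a,x)$ and $x+y=q(e,2,q(x,\muu,y))$.

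Next I would verify that these assignments are genuinely functorial, namely that each sends a morphism to a morphism and respects identities and composition. On the morphism level both functors act as $(f,g)\mapsto(f,g)$, leaving the underlying maps unchanged while reinterpreting $f$ as a ring homomorphism or a mobi morphism (the two notions coincide by Proposition \ref{thm: mobi=ring}). The only real content is checking that the condition $g(q(x,a,y))=q'(g(x),f(a),g(y))$ is equivalent, under the dictionary $q(x,a,y)=\varphi_{1-a}(x)+\varphi_a(y)$, to the pair of module-morphism conditions $g(x+y)=g(x)+g(y)$ and $g(\varphi_a(x))=\varphi_{f(a)}(g(x))$, together with $g(e)=e$. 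One direction expands $q$ via $\varphi$ and distributes $g$; the converse recovers the additive and scalar conditions by evaluating the mobi condition at well-chosen arguments (for instance $a=\muu$ with the identity $q(e,2,q(x,\muu,y))=x+y$ to extract additivity, and $x=e$ to extract the $\varphi$-equation).

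Finally I would show that $\Psi\circ\Phi$ and $\Phi\circ\Psi$ are the identity functors. On objects this is exactly the content of Propositions \ref{module2module} and \ref{mobi2mobi}, which assert that the round-trip constructions return the original module and the original affine mobi space respectively; on the scalar side the round trip is the identity by the isomorphism of Proposition \ref{thm: mobi=ring}. Since both functors are the identity on underlying maps, the composites are the identity on morphisms as well, so the two functors are strictly inverse and the categories are isomorphic.

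I expect the main obstacle to be the morphism-level equivalence in the second paragraph, since the object-level round trips are already handed to us by Propositions \ref{module2module} and \ref{mobi2mobi}. The delicate point is extracting the two separate module conditions from the single mobi condition: this requires the affineness of $(X,q)$ together with the identity $q(e,\muu,x+y)=q(x,\muu,y)$ established in the proof of Theorem \ref{mobi2module}, and an application of the cancellation axiom \ref{space_cancel} to strip off the outer $q(e,\muu,-)$. Once that equivalence is in place, functoriality and the inverse relations follow without further difficulty.
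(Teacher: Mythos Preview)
Your proposal is correct and follows essentially the same architecture as the paper: define the two functors via Theorems \ref{module2mobi} and \ref{mobi2module}, check that morphisms are preserved in each direction, and invoke Propositions \ref{module2module} and \ref{mobi2mobi} for the round trips on objects. One small remark: the ``main obstacle'' you anticipate is lighter than you expect---the paper extracts the module-morphism conditions from the mobi condition by direct substitution of the definitions $x+y=q(e,2,q(x,\muu,y))$ and $\varphi_a(x)=q(e,a,x)$ (together with the observation $f(2)=2$), without appealing to affineness or to the cancellation axiom \ref{space_cancel} at that stage.
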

\begin{proof}
Theorem \ref{module2mobi} gives us a functor from modules to pointed affine mobi spaces defined on objects. It remains to show that morphisms of modules are transformed into morphisms of pointed affine mobi spaces. Indeed, let $(f,g)$ be a homomorphism of modules, then:
\begin{eqnarray*}
f\left(p(a,b,c)\right)&=&f(a-b\cdot a+b\cdot c)\\
                      &=&f(a)-f(b)\cdot f(a)+f(b)\cdot f(c)\\
											&=&p'\left(f(a),f(b),f(c)\right)
\end{eqnarray*}
\begin{eqnarray*}
g\left(q(x,a,y)\right)&=&g\left(\varphi_  {1-a}(x)+\varphi_a(y)\right)\\
                      &=&g\left(\varphi_  {1-a}(x)\right)+g\left(\varphi_a(y)\right)\\
											&=&\varphi_{f(1-a)}(g(x))+\varphi_{f(a)}(g(y))\\
											&=&\varphi_{1-f(a)}(g(x))+\varphi_{f(a)}(g(y))\\
											&=&q'\left(g(x),f(a),g(y)\right).
\end{eqnarray*}

Theorem \ref{mobi2module} gives us a functor from pointed affine mobi spaces into modules over a ring defined on objects. It remains to show that morphisms of  pointed affine mobi spaces are transformed into morphisms of modules over a ring. Let $(f,g)$ be a morphism of affine mobi spaces with $2$. First, we observe that $f(2)=2$ because, from the definition of $2$, we have $f\left(p(0,\muu,2)\right)=f(1)$ and consequently $p'(0,\muu,f(2))=1=p'(0,\muu,2)$ which gives the result by application of Axiom \ref{alg_cancel}. We then have:
\begin{eqnarray*}
f(a+b)&=&f\left(p(0,2,p(a,\muu,b))\right)\\
      &=&p'(0,2,p'(f(a),\muu,f(b)))\\
			&=&f(a)+f(b)
\end{eqnarray*}
\begin{eqnarray*}
f(a\cdot b)&=&f\left(p(0,a,b)\right)\\
      &=&p'(0,f(a),f(b))\\
			&=&f(a)\cdot f(b)
\end{eqnarray*}
\begin{eqnarray*}
g\left(\varphi_a(x)\right)&=&g\left(q(e,a,x)\right)\\
      &=&q'(e,f(a),g(x)))\\
			&=&\varphi'_{f(a)}\left(g(x)\right)
\end{eqnarray*}
\begin{eqnarray*}
g\left(x+y\right)&=&g\left(q(e,2,q(x,\muu,y))\right)\\
      &=&q'(e,2,q'(g(x),\muu,g(y)))\\
			&=&g(x)+g(y).
\end{eqnarray*}
Theorems \ref{module2module} and \ref{mobi2mobi} show that the two constructions are inverse to each other thus defining an isomorphism of categories.
\end{proof}


\section{Conclusion}
There are several different ways to look at the notion of an affine space (see e.g. \cite{Bertram, bourn_geometrie,Porteus}). Roughly speaking it may be seen as a set of points in which every point can be chosen as the origin of a vector space. We have shown that in the case where the scalars (elements of the mobi algebra) form a field with characteristic different from $2$, our concept of affine mobi space is precisely an affine space in the usual sense. Furthermore, it is possible to consider a ring of scalars rather than a field and our results show that multiplication does not need to be commutative. In any case, the existence of $2$ and $\muu$ is required in order to define the module addition $x+y=q(e,2,q(x,\muu,y))$ and prove that an affine mobi space $(X,q)$ admits a module structure 
for every choice of an origin $e\in X$, with scalar multiplication $a x=q(e,a,x)$.

In \cite{inprogress1} it is shown that the isomorphism between unitary rings with one-half and mobi algebras with one double is part of a more general connection between the categories of unitary rings and mobi algebras, thus suggesting that the previous result can be extended too. Another line of investigation which can be developed further is to analyse under which conditions the structure of an affine mobi space $(X,q)$, over a mobi algebra $A$, is determined by a mobi algebra homomorphism $f\colon{A\to \End(X)}$ for an appropriate mobi algebra structure on the set of endomorphisms of $X$ (see Proposition 2.1 in \cite{preprint1}).


\end{document}